\newtheorem{theo}{Theorem}
\newtheorem{prop}{Proposition}[section]
\newtheorem{lem}{Lemma}[section]
\newtheorem{cor}{Corollary}
\newtheorem{rem}{Remark}[section]
\theoremstyle{definition}
\newtheorem{defi}{Definition}[section]
\numberwithin{equation}{section}
\newcommand{\C}{\mathbb C} 
\newcommand{\R}{\mathbb R}
\newcommand{\partx}{\partial/\partial x}
\begin{document}
  
\begin{abstract}

Let $D=\{\rho < 0\}$ be a smooth relatively compact domain in an almost complex manifold $(M,J)$,   
where $\rho$ is a smooth defining function of $D$, strictly $J$-plurisubharmonic in a neighborhood of the closure $\overline{D}$ of $D$. We prove that $D$ has a connected boundary and is Gromov hyperbolic.
\end{abstract} 

\title[Gromov hyperbolicity of strongly pseudoconvex almost complex manifolds]
{Gromov hyperbolicity of strongly pseudoconvex almost complex manifolds} 

\author{Florian Bertrand and Herv\'e Gaussier}

\subjclass[2010]{32Q45, 32Q60, 32T15, 58E05}
\keywords{Almost complex manifold, Gromov hyperbolicity, Kobayashi hyperbolicity, Morse theory}
\thanks{Research of the first author was supported by FWF grants AY0037721 and M1461-N25.}
\maketitle 

\section*{Introduction}
Complex Finsler geometry is an important branch of differential geometry, generalizing Hermitian geometry and carrying precious information on the geometry of the ambient complex manifold; its development goes back to the introduction of the Carath\'eodory pseudometric. The interest in complex Finsler geometry increased with the works of S.Kobayashi and his characterization of ample vector bundles, see \cite{kob}. We will focus in our paper on the Kobayashi metric, another well-known example of a (not necessarily smooth) complex Finsler pseudometric. One may refer to \cite{kob2} for a presentation of complex hyperbolic spaces. When the integrated pseudodistance associated to that metric is a distance, the corresponding metric space is called Kobayashi hyperbolic and the complex manifold carrying the distance inherits complex dynamical properties particularly adapted to study spaces of holomorphic maps. In the case of bounded domains in $\mathbb C^n$, curvature conditions on the boundary furnish information on the global behaviour of geodesics in the associated metric space, as in the celebrated paper of L.Lempert \cite{lem} for smooth strongly convex domains, or on their local boundary behavior, as in \cite{fo-ro, gra, ma} for strongly pseudoconvex domains.

Gromov hyperbolic spaces were introduced by M.Gromov \cite{gr2} in the eighties as geodesic metric spaces in which 
geodesic triangles are thin. See for instance \cite{bo-he-ko, bo-sc, co-de-pa, gh-ha} for different extensions of the theory. 
Trying to find examples of Gromov hyperbolic metric spaces or to characterize them received much attention. One could quote for instance the papers by Z.Balogh-S.Buckley \cite{bal-buc} in which the authors characterize Gromov hyperbolic domains in the Euclidean space $\mathbb R^n$ endowed with their quasi-hyperbolic distance, or by Y.Benoist in which the author characterizes Gromov hyperbolic convex domains for the Hilbert metric \cite{ben}. The Hilbert metric may be defined in the general setting of projective maps in real projective spaces, using the interval $]-1,1[$ as a gauje. The exact complex analogue provides the Kobayashi distance, where projective maps are replaced with holomorphic maps, the gauje being the unit disc endowed with the Poincar\'e metric. That similarity between the Hilbert metric and the Kobayashi metric was first observed by S.Kobayashi \cite{kob3}. The question of finding examples of Gromov hyperbolic domains, in $\mathbb C^n$, for their Kobayashi distance, is therfore natural. 
The Poincar\'e disc and as a generalization the unit ball in $\mathbb C^n$ endowed with its Kobayashi metric are well-known such examples.

The aim of the paper is to prove that every relatively compact, strictly pseudoconvex region in an almost complex manifold, endowed with the Kobayashi distance, is Gromov hyperbolic. In a first step we prove that such regions have a connected boundary using the Morse theory (Theorem 1). In a second step we prove the Gromov hyperbolicity of the region by studying  the large scale behaviour of geodesics (Theorem 2). That second part is in the vein of the results due to Z.Balogh-M.Bonk \cite{ba-bo} and L.Blanc-Centi \cite{bl}. For clarity and for the convenience of the reader we include a detailed proof, with the necessary adaptation in the (almost complex) manifold setting. We point out that a product of any two Kobayashi hyperbolic manifolds is never Gromov hyperbolic \cite{ba-bo}, explaining the strong pseudoconvexity condition on the domain. We also point out that a relatively compact strongly pseudoconvex domain in an almost complex manifold is not necessarily Kobayashi hyperbolic. One can prove in fact that a relatively compact strongly pseudoconvex domain in an almost complex manifold is complete hyperbolic if and only if it does not contain a Brody curve. However it is easy to see that a relatively compact strongly pseudoconvex region in an almost complex manifolds does not contain any Brody curve.

The two main results of the paper are the following:
\begin{theo}\label{theoconnect}
Let $D=\{\rho < 0\}$ be a smooth relatively compact domain in an almost complex manifold $(M,J)$,   
where $\rho$ is a smooth defining function of $D$, strictly $J$-plurisubharmonic in a neighborhood of the closure $\overline{D}$ of $D$. Then the boundary $\partial D$ of  $D$ is connected.
\end{theo}

\begin{theo}\label{theogro1}
Under the same assumptions as in Theorem \ref{theoconnect},  the domain $D$ endowed with its Kobayashi distance $d_{(D,J)})$ is Gromov hyperbolic. 
\end{theo}

\vskip 0,1cm
In Section 1 we present the necessary preliminaries. In Section 2 we prove Theorem~\ref{theoconnect}. Finally in Section 3 we prove Theorem~\ref{theogro1}.

\vskip 0,1cm
\noindent  {\it Acknowledgments.} The authors would like to thank G.Della Sala for helpful
discussions.

\section{Preliminaries}

\subsection{Almost complex manifolds and pseudoholomorphic discs}

An {\it almost complex structure} $J$ on a real smooth manifold $M$ is a $\left(1,1\right)$ tensor field
 which  satisfies $J^{2}=-Id$. We suppose that $J$ is smooth.
The pair $\left(M,J\right)$ is called an {\it almost complex manifold}. We denote by $J_{st}$
the standard integrable structure on $\C^{n}$ for every $n$.
A differentiable map $f:\left(M',J'\right) \longrightarrow \left(M,J\right)$ between two almost complex manifolds is said to be 
 {\it $\left(J',J\right)$-holomorphic}  if $J\left(f\left(p\right)\right)\circ d_{p}f=d_{p}f\circ J'\left(p\right),$ 
for every $p \in M'$. In case  $M'=\Delta$ is the unit disc in $\C$, such a map is called a {\it pseudoholomorphic disc}.  

\subsection{Strongly $J$-pseudoconvex domains}

Let $\rho$ be a smooth real valued function on
a smooth almost complex manifold  $\left(M,J\right).$
We denote by $d^c_J\rho$ the differential 
form defined by $d^c_J\rho\left(v\right):=-d\rho\left(Jv\right),$
where  $v$ is a section of $TM$. The {\it Levi form} of $\rho$ at a point $p\in M$ and a vector 
$v \in T_pM$ is defined by $\mathcal{L}_J\rho\left(p,v\right):=dd^c_J\rho(p)\left(v,J(p)v\right).$ We say that $\rho$ is {\it strictly $J$-plurisubharmonic} if 
$\mathcal{L}_J\rho\left(p,v\right)>0$ for any $p \in M$ and $v\neq 0 \in T_p M$. The boundary of a domain $D$ is {\it strongly $J$-pseudoconvex} if at any point 
$p \in \partial D$ there exists a  smooth strictly $J$-plurisubharmonic function $\rho$ defined in a neighborhood $U$ of $p$ in $M$ satisfying 
$\nabla \rho \neq 0$ on $\partial D \cap U$ and such that $D\cap U=\{\rho<0\}$. We say that a domain $D=\{\rho<0\}$ is a {\it strongly $J$-pseudoconvex region} 
in $(M,J)$ if $\rho$ is a smooth defining function of $D$,  strictly $J$-plurisubharmonic in a neighborhood of $\overline{D}$.

\subsection{Hypersurfaces of contact type}

Let $(V,\omega)$ be a symplectic manifold, namely $\omega$ is a closed, non-degenerate, skew symmetric 2-form on the smooth manifold $V$. Let $\Gamma$ be a hypersurface contained in $V$. We say that $\Gamma$ is of  {\it contact type} if there is a vector field $X$, defined near $\Gamma$, transverse to $\Gamma$ and pointing outwards, such that $d(i(X)\omega)=\omega$. The 1-form $\alpha=i(X)\omega$ is a  {\it contact form} on $\Gamma$ and it defines a  {\it contact structure} $\zeta=\ker \alpha$ on $\Gamma$.

Let $D$ be a relatively compact domain in an almost complex manifold.
The {\it complex tangent space} of the boundary $\partial D$ of $D$ is by definition $T^{J}\partial D:=T\partial D \cap J \partial D$.  
If $D$ is strongly $J$-pseudoconvex, the complex tangent space  $T^J\partial D$ is a contact structure. 
More precisely, let $\rho$ be a defining function of $\partial D$. Since $D$ is strongly $J$-pseudoconvex there exists a positive constant $c$ such that 
the function $\tilde \rho=\rho+c\rho^2$ is strictly $J$-plurisubharmonic in a neighborhood $U$ of  $\partial D$ and such that 
$\partial D = \{\tilde \rho = 0\}$ and  $D \cap U=\{\tilde \rho < 0\}$. 
Consider the one-form $d^c_J\tilde \rho$ and let $\alpha$ be its restriction to the tangent bundle $T\partial D$. It follows that 
$T^{J}\partial D={\rm Ker} \alpha$.
Due to the strict $J$-plurisubharmonicity of $\tilde \rho$, the 2-form $\omega:=dd^c_J\tilde\rho$ is a symplectic form on $U$ that tames $J$, $T^J\partial D$ is 
a contact structure  and $\alpha$ is a contact form for $T^J\partial D$. If  the boundary $\partial D$ of $D$ is connected, 
 a theorem due to W.L.Chow \cite{ch} states that any two points in $\partial D$ may be connected by a  $\mathcal{C}^1$ {\it horizontal curve},  i.e. a curve  $\gamma~: [0,1] \rightarrow \partial D$ satisfying  $\gamma'(s) \in T_{\gamma(s)}^J\partial D$ for every $s \in [0,1]$.
 This enables to define the {\it Carnot-Carath\'eodory metric } as follows (see \cite{bel, gr3}):
\begin{equation*}
\displaystyle d_{H}(p,q):= \inf \left\{ l(\gamma),
\gamma~: [0,1] \rightarrow \partial D \mbox{ }\mbox{ horizontal }\mbox{ }, \gamma(0)=p, \gamma(1)=q \right\},
\end{equation*}
where $l(\gamma)$ is  the {\it Levi length} of the horizontal curve $\gamma$ defined by 
$\displaystyle l(\gamma):= \int_0^1 \mathcal L_J\tilde\rho(\gamma(t),\gamma'(t))^{\frac{1}{2}}dt$.

\subsection{The Kobayashi pseudometric}
The existence of local pseudoholomorphic discs proved in \cite{ni-wo} 
enables to define the {\it Kobayashi pseudometric} $K_{\left(M,J\right)}$ for $p\in M$ and $v \in T_pM$:
$$K_{\left(M,J\right)}\left(p,v\right):=\inf 
\left\{\frac{1}{r}>0, u: \Delta \rightarrow \left(M,J\right) 
\mbox{  $J$-holomorphic }, u\left(0\right)=p, d_{0}u\left(\partx\right)=rv\right\},$$ 
and its integrated pseudodistance  $d_{\left(M,J\right)}$:
$$d_{\left(M,J\right)}\left(p,q\right): =\inf\left\{l_K(\gamma), \mbox{ }
\gamma: [0,1]\rightarrow M, \mbox{ }\gamma\left(0\right)=p, \gamma\left(1\right)=q\right\},$$
for $p,q \in M$, 
where where $l_K(\gamma)$ is  the {\it Kobayashi length} of a $\mathcal C^1$-piecewise smooth curve $\gamma$ defined by 
$\displaystyle l_K(\gamma):= \int_0^1 K_{\left(M,J\right)}\left(\gamma\left(t\right),\gamma'\left(t\right)\right)dt$.
The manifold $\left(M,J\right)$ is {\it Kobayashi hyperbolic} if $d_{\left(M,J\right)}$ is a distance.

Let $h$ be any $J$-hermitian metric in $M$ and denote by $\| \cdot \|_h$ the associated infinitesimal metric. A {\it Brody curve} is a nonconstant $J$-holomorphic curve from $\mathbb C$ to $M$ satisfying $\|df\|_h \leq 1$.
Notice that if $D$ is a relatively compact strongly $J$-pseudoconvex domain in $(M,J)$ or is compact, 
then $D$ contains no Brody curve if and only if $D$ is Kobayashi hyperbolic.

\subsection{Gromov hyperbolic spaces}

Let $(X,d)$ be a metric space. The {\it Gromov product} of two points $x,y \in X$ with respect to a basepoint $\omega \in X$ is defined by
$(x|y)_\omega:=\frac{1}{2}(d(x,\omega)+d(y,\omega)-d(x,y)).$ The Gromov product measures the failure of the triangle inequality to be an equality and
 is always nonnegative. The metric space $X$ is {\it Gromov hyperbolic} if there is a nonnegative constant $\delta$ such that for any
$x,y,z,\omega \in X$ one has:
\begin{equation}\label{3eqgrhy1}
(x|y)_\omega \geq \min((x|z)_\omega,(z|y)_\omega)-\delta.
\end{equation}
We point out that (\ref{3eqgrhy1}) can be also written as follows:
\begin{equation}\label{3eqgrhy2}
d(x,y)+d(z,\omega)\leq \max(d(x,z)+d(y,\omega),d(x,\omega)+d(y,z))+2\delta,
\end{equation}
for $x,y,z,\omega \in X$.


\begin{defi} 
Let $d$ and $d'$ be two metrics on $X$.
\begin{enumerate} 
\item $(X,d)$ and  $(X,d')$  are roughly isometric if there exists a positive constant 
$c$ such that for every $x,y \in X$:
$$d(x,y)-c \leq d'(x,y) \leq d(x,y)+c.$$ 
\item $(X,d)$ and  $(X,d')$ are quasi-isometric if there exist two positive constants 
$\lambda$ and $c$ such that for every $x,y \in X$:
$$\frac{1}{\lambda}d(x,y)-c \leq d'(x,y) \leq \lambda d(x,y)+c.$$
\end{enumerate}
\end{defi}

If  $(X,d)$ and $(X,d')$ are roughly isometric  then $(X,d)$ is Gromov hyperbolic if and only if  $(X,d')$ is Gromov hyperbolic. In case we consider  quasi-isometric spaces, the spaces need to be both geodesic. This is provided by  
the following theorem (see Theorem 12 p. 88 in \cite{gh-ha}) which is crucial for us:

\vskip 0,2cm
\noindent{\bf Theorem [Gh-dlH].}
{\sl
Let $(X,d)$ and $(X,d')$ be two quasi-isometric geodesic metric spaces. Then $(X,d)$ is Gromov hyperbolic if and only if $(X,d')$ is.
}

\vskip 0,2cm
We recall that a metric space $(X,d)$ is a {\it geodesic space} if any  two distinct points $x,y \in X$ can be joined by a {\it geodesic segment},
that is the image of an isometry $\gamma~: [0,d(x,y)] \rightarrow X$ with $\gamma(0)=x$ and $\gamma(d(x,y))=y$.

\section{Proof of Theorem~\ref{theoconnect}}

The proof of Theorem~\ref{theoconnect} is based on the Morse theory and more precisely on the fact that the topology of a domain defined by a Morse function can change only at critical points of the function (see Section I.3 "Homotopy Type in Terms of Critical Values" in \cite{mi} for more details). We recall that a {\it Morse function} is a smooth function which Hessian is non-degenerate at its critical points. The {\it Morse index} of a non-degenerate 
critical point  of such a function is the number of negative eigenvalues of its Hessian.
A level set $\{\rho=c\}$ containing a critical point of the function $\rho$ will be called a {\it critical set} and the corresponding value $c$ will be called a {\it critical value} of $\rho$.

Let $D=\{\rho<0\}$ be a relatively compact strongly $J$-pseudoconvex region.  After a small perturbation 
of $\rho$ that preserves  both the strict $J$-plurisubharmonicity of the defining function and the number of connected 
components of the boundary $\partial D=\{\rho=0\}$, we can suppose that $\rho$ is a Morse function. Hence $\rho$ 
has a finite number of critical points denoted by $p_1,\cdots,p_r$. We denote by $k_j$ the Morse index of 
the critical point $p_j$, $j=1,\cdots,r$.   
Moreover, we can always assume that every critical 
set contains only one critical point. We order the critical points in such a way such that the critical 
values $-c_j:=\rho(p_j)$ satisfy $-c_j>-c_{j+1}$ for $j=1,\dots,r-1$.
The number of connected components of   $\overline{D} \cap \{\rho= -t \}$ for $t \geq 0$ can change only at a 
critical level set. 
More precisely, it is sufficient to  describe the topology of 
$\overline{D} \cap \{\rho= -t \}$ in a neighborhood of a critical level set near a critical point.  
We denote by $C_{j,t} \in \mathbb{N}\cup\{\infty\}$ the number of connected components of 
the set $D\cap  \{\rho=-c_j+t\}$. 
\begin{lem}\label{lemcritic}
Under the above assumptions, we have for $\varepsilon>0$, sufficiently small:   
\begin{enumerate}
\item if $2\leq k_j \leq n$ then $C_{j,\varepsilon}=C_{j,-\varepsilon}$,
\item if $k_j=1$ then $ C_{j,\varepsilon} \leq C_{j,-\varepsilon}$,
\item for every critical point $p_j$ of Morse index  $k_j=0$, there is a neighborhood $U_j$ of  $p_j$ 
such that $\{\rho=-c_j+\varepsilon\} \cap U_j$ is connected and 
$\{\rho=-c_j-\varepsilon\} \cap U_j$ is empty. Moreover $k_r=0$ and $C_{r,\varepsilon}=1$.
\end{enumerate}   
\end{lem}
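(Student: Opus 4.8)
The plan is to reduce everything to the local normal form provided by the Morse Lemma together with the standard fact that, in the absence of critical values, nearby level sets of $\rho$ are mutually diffeomorphic. Fix a critical point $p_j$ of index $k_j$ and, by the Morse Lemma, choose coordinates $(u,v)\in\R^{k_j}\times\R^{2n-k_j}$ centered at $p_j$ in which $\rho=-c_j-|u|^2+|v|^2$. Let $U_j=\{|u|^2+|v|^2<\delta^2\}$ be a small coordinate ball containing no other critical point. Since $p_j$ is the only critical point with value $-c_j$ and $\varepsilon$ is small, the gradient flow of $\rho$ gives, for $t\in[-\varepsilon,\varepsilon]$, diffeomorphisms between the exterior pieces $\{\rho=-c_j+t\}\setminus U_j$ that are compatible with their boundaries; in particular the number of components of $\{\rho=-c_j+t\}$ can change, as $t$ crosses $0$, only through the piece lying in $U_j$. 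These pieces are glued to the exterior along the attaching locus $A_t:=\{\rho=-c_j+t\}\cap\partial U_j$, which for small $t$ is diffeomorphic to $S^{k_j-1}\times S^{2n-k_j-1}$ and varies smoothly, so its position among the components of the (fixed) exterior piece is the same for $t=\varepsilon$ and $t=-\varepsilon$.

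Next I would compute the inner pieces explicitly. For $t=\varepsilon>0$ the equation $|v|^2=|u|^2+\varepsilon$ forces $v\neq0$, so $\{\rho=-c_j+\varepsilon\}\cap U_j$ fibers over a ball in the $u$-variable with fiber $S^{2n-k_j-1}$; for $t=-\varepsilon<0$ the equation $|u|^2=|v|^2+\varepsilon$ forces $u\neq0$ and the inner piece fibers over a ball in the $v$-variable with fiber $S^{k_j-1}$. Here the hypothesis $k_j\le n$ from Lemma~\ref{lemmorse} is essential: it guarantees $2n-k_j-1\ge1$ (for $n\ge2$, as $2n-k_j-1\ge n-1$), so the positive-direction sphere $S^{2n-k_j-1}$ is connected and the inner piece for $t=\varepsilon$ is connected. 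If moreover $k_j\ge2$, then $S^{k_j-1}$ is also connected, both inner pieces as well as $A_{\pm\varepsilon}$ are connected, and gluing a connected piece along a connected locus lying in a single exterior component changes neither the set nor the number of components; this gives $C_{j,\varepsilon}=C_{j,-\varepsilon}$, which is assertion (1). If $k_j=0$ the negative directions are absent: $\{\rho=-c_j+\varepsilon\}\cap U_j=\{|v|^2=\varepsilon\}$ is a single sphere $S^{2n-1}$ lying entirely inside $U_j$, while $\{\rho=-c_j-\varepsilon\}\cap U_j=\{|v|^2=-\varepsilon\}$ is empty, which is the first part of (3).

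For the index $k_j=1$ of (2), the attaching locus $A_{\pm\varepsilon}\cong S^0\times S^{2n-2}$ has exactly two components, say $A^+$ and $A^-$, lying in (possibly equal) exterior components $P$ and $Q$. The inner piece for $t=\varepsilon$ is a connected tube $S^{2n-2}\times[-a,a]$ joining $A^+$ to $A^-$, whereas for $t=-\varepsilon$ it consists of two caps, one bounded by $A^+$ and one by $A^-$. Passing from $t=-\varepsilon$ to $t=\varepsilon$ thus replaces the two caps by a single tube: if $P\neq Q$ this merges two exterior components into one, so $C_{j,\varepsilon}=C_{j,-\varepsilon}-1$, while if $P=Q$ the count is unchanged; in either case $C_{j,-\varepsilon}\ge C_{j,\varepsilon}$, which is (2). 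Finally, the absolute minimum of $\rho$ on $\overline{D}$ is attained at an interior point, since $\rho<0$ on $D$ and $\rho=0$ on $\partial D$; this is a nondegenerate critical point of index $0$, and relabelling it $p_r$ gives $k_r=0$. As $p_r$ is a strict global minimum, the level set $\{\rho=-c_r+\varepsilon\}$ is contained in a neighborhood of $p_r$ and reduces to the single sphere found above, so $C_{r,\varepsilon}=1$.

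The main obstacle is the passage from the purely local surgery picture to the global count of components. One must verify carefully that the exterior piece is genuinely unchanged across $t=0$ (the gradient-flow step) and that the only global datum that matters is whether the components of $A_t$ lie in one or in two components of that fixed exterior piece. The second delicate point is the role of the index bound $k_j\le n$: without it the positive sphere $S^{2n-k_j-1}$ could be disconnected (as for $k_j=2n-1$), the inner piece for $t=\varepsilon$ would break up, and the clean behavior in (1)--(2) would fail. I would therefore establish Lemma~\ref{lemmorse} first, deducing the bound $k_j\le n$ from the strong $J$-pseudoconvexity, and only then carry out the case analysis above.
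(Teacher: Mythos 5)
Your proof is correct and takes essentially the same approach as the paper: both arguments rest on the index bound $k_j\le n$ supplied by Lemma \ref{lemmorse} together with a case-by-case analysis of the local level sets $\{\rho=-c_j\pm\varepsilon\}$ near $p_j$ (a connected $S^{2n-k_j-1}$-bundle on the upper side, an $S^{k_j-1}$-bundle on the lower side, with $k_j=0,1$ treated separately). If anything, your write-up is more careful than the paper's, since you make explicit the pseudo-gradient-flow argument showing that only the piece inside $U_j$ can affect the component count, and you flag the hypothesis $n\ge 2$ needed for $S^{2n-k_j-1}$ to be connected, which the paper leaves implicit.
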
 
Notice that according to this lemma, $C_{j,t}$ are finite for  $j=1,\dots,r-1$ and $t\in \R$.
In order to prove Lemma \ref{lemcritic}, we  need the following version of the Morse Lemma:
\begin{lem}\label{lemmorse}
Let $(M,J)$ be an almost complex manifold. Let $\rho$
be a strictly $J$-plurisubharmonic Morse function on $M$. Let $p$ be a critical point of $\rho$. Then there exist local (not necessarily holomorphic) 
coordinates $z=(z_1,\cdots,z_n)$, with $z_j=x_j+iy_j$, centered at $p$ and defined in a neighborhood $U_j$ of $p_j$, such that one has
\begin{equation}\label{eqmorse}
\rho(z) = \rho(0) + \sum_{j=1}^{n} x_j^2+\sum_{j=1}^{n} a_j y_j^2 +O(|z|^3)
\end{equation}
where $a_j=\pm 1$. In particular the Morse index of a critical point is smaller than $n$.    
\end{lem}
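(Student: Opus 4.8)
The plan is to reduce the statement to a normalization of the second-order Taylor expansion of $\rho$ at $p$, the point being that strict $J$-plurisubharmonicity forces the $(1,1)$-part of the real Hessian to be positive definite, which is exactly what produces the $n$ positive squares $\sum x_j^2$. First I would choose smooth local coordinates centered at $p$ in which $J(p)=J_{st}$; this is always possible at a single point. Since $p$ is a critical point, $d\rho(p)=0$, and this is the crucial simplification: writing $d^c_J\rho(\partial_j)=-\sum_k J^k_j\,\partial_k\rho$ and differentiating, every term in which a derivative falls on the structure components $J^k_j$ is multiplied by a first derivative $\partial_k\rho(p)=0$ and therefore drops out. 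Hence at $p$ the form $dd^c_J\rho(p)$, and with it the Levi form $\mathcal L_J\rho(p,\cdot)$, coincides with the value computed for the \emph{standard} structure, i.e. with the ordinary complex Hessian $H_{j\bar k}=\partial^2\rho/\partial z_j\partial\bar z_k(p)$. Strict $J$-plurisubharmonicity then says exactly that $H$ is a positive definite Hermitian matrix.

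Next I would split the real Hessian $Q(v)=\mathrm{Hess}\,\rho(p)(v,v)$ into its $(1,1)$-component, governed by $H$, and its $(2,0)+(0,2)$-component, governed by the complex symmetric matrix $A_{jk}=\partial^2\rho/\partial z_j\partial z_k(p)$, so that, up to a harmless positive factor, $Q(v)=\sum_{j,k}H_{j\bar k}v_j\bar v_k+\mathrm{Re}\big(\sum_{j,k}A_{jk}v_jv_k\big)$. The normalization is then a simultaneous-reduction problem for the pair $(H,A)$ under complex-linear changes $z\mapsto Tz$, under which $H\mapsto T^*HT$ and $A\mapsto T^tAT$. I would first pick $T_1$ with $T_1^*HT_1=\mathrm{Id}$, which is possible since $H>0$; the residual freedom is the unitary group, under which $H$ stays $\mathrm{Id}$ while $A\mapsto V^tAV$. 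Invoking the Takagi (Autonne) factorization of the complex symmetric matrix $A$, there is a unitary $V$ with $V^tAV=\mathrm{diag}(\mu_1,\dots,\mu_n)$, $\mu_j\ge 0$ real. Writing $v_j=\xi_j+i\eta_j$ and using $\mathrm{Re}(v_j^2)=\xi_j^2-\eta_j^2$, $|v_j|^2=\xi_j^2+\eta_j^2$, the quadratic form becomes diagonal,
$$Q=\sum_{j=1}^n(\mu_j+1)\,\xi_j^2+\sum_{j=1}^n(1-\mu_j)\,\eta_j^2.$$
Since $\rho$ is Morse, $Q$ is non-degenerate, so $\mu_j\ne 1$ for every $j$; a final real rescaling $x_j=\sqrt{\mu_j+1}\,\xi_j$, $y_j=\sqrt{|1-\mu_j|}\,\eta_j$ brings $\rho$ to the desired form $\rho(0)+\sum_j x_j^2+\sum_j a_jy_j^2+O(|z|^3)$ with $a_j=\mathrm{sgn}(1-\mu_j)=\pm1$.

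The bound on the Morse index is then immediate: the coefficients of the $n$ squares $x_j^2$ are all $+1>0$, so the negative eigenspace of the Hessian has dimension at most $n$. Equivalently, and independently of the explicit normalization, if $Q$ were negative definite on a real subspace $W$ with $\dim_\R W>n$, then $\dim_\R W+\dim_\R JW>2n$ would force a nonzero $v\in W\cap JW$, whence both $v$ and $Jv$ lie in $W$ and $H(v,v)=\tfrac12\big(Q(v)+Q(Jv)\big)<0$, contradicting $H>0$.

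The main obstacle I anticipate is the first step — verifying cleanly that at the critical point the almost complex data reduce to the integrable model, i.e. that the derivatives of $J$ make no contribution to $dd^c_J\rho(p)$. Once this is in place the argument is pure linear algebra, and the genuinely useful ingredient is the Takagi factorization: it is precisely what lets the Hermitian form and the complex symmetric form be diagonalized simultaneously, and hence what produces the rigid shape $\sum x_j^2+\sum a_jy_j^2$ with all $x$-coefficients equal to $+1$.
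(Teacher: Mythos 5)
Your proposal is correct, and its linear-algebra core is exactly the paper's: reduce the Hermitian part $H$ to the identity, diagonalize the complex symmetric part by unitary congruence (the paper cites Lemma 7.2 of Coupet--Sukhov--Tumanov, which is precisely the Takagi/Autonne factorization you invoke), and conclude $\mu_j\neq 1$ from Morse nondegeneracy before the final real rescaling. Where you genuinely differ is the first step. The paper reaches the model situation by invoking the Diederich--Sukhov normalization (Lemma 3.2 and Proposition 3.5 of \cite{di-su}), which produces coordinates in which the Levi forms of $\rho$ with respect to $J$ and $J_{st}$ coincide at the origin --- a result valid at \emph{any} point, used here as a black box. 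You instead exploit criticality directly: in any coordinates with $J(p)=J_{st}$, every term of $dd^c_J\rho(p)$ in which a derivative falls on the components of $J$ is contracted against $d\rho(p)=0$, so $dd^c_J\rho(p)$ sees only the pointwise value $J(p)$ and the Hessian of $\rho$, hence equals the standard $dd^c$ at $p$. This computation is correct and makes the lemma self-contained and more elementary, at the (irrelevant here) cost of working only at critical points; the paper's route carries the extra strength of a normalization available away from critical points. Your second, coordinate-free argument for the index bound (a negative subspace $W$ with $\dim_{\R}W>n$ meets $JW$ nontrivially, and averaging $Q$ over $v$ and $Jv$ recovers the positive Hermitian part) is a nice addition not present in the paper, and it is also correct.
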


\begin{proof}[Proof of Lemma \ref{lemmorse}]
Let $p$ be a critical point of $\rho$. We use a normalization due to K.Diederich and A.Sukhov (see Lemma 3.2 and Proposition 3.5 in \cite{di-su}). In the associated coordinate system $w=(w_1,\cdots,w_n)$ centered at $p=0$, the Levi forms of $\rho$ at the origin, with respect to $J_{st}$ and $J$, coincide. In particular $\rho$ is strictly $J_{st}$-plurisubharmonic at the origin. Then
$$
\rho(w) = \rho(0) + \sum_{i,j=1}^n a_{ij} w_i \overline w_j
+ \Re e  \sum_{i,j=1}^n b_{ij} w_i w_j + O(|w|^3),
$$
where $A=(a_{ij})$ and $B=(b_{ij})$ are respectively  Hermitian and symmetric matrices.
Applying a linear transformation $w\mapsto \tilde{w}:=Lw$, we can reduce $A$ to the identity $A = I_n$.
Moreover we can make a unitary transformation $\tilde{w} \mapsto \tilde{z}:=U \tilde{w}$
preserving $\sum_{j=1}^{n} |\tilde{w}_j|^2$ and changing  the matrix $B$ into a diagonal matrix $U^t B U$ with nonnegative elements (see Lemma  7.2 in \cite{co-su-tu}). Then 
the expression of $\rho$ in the $\tilde{z}=(\tilde{z}_1,\cdots,\tilde{z}_n)$ coordinates reduces to
$$
\rho(\tilde{z}) = \rho(0) +  \sum_{j=1}^{n} |\tilde{z}_j|^2 + \sum_{j=1}^{n} \alpha_j \Re e \tilde{z}_{j}^2+ O(\vert \tilde{z} \vert^3),
$$
where $\alpha_j \geq 0$ for $j=1,\dots,n$. According to the non-degeneracy of the Hessian of $\rho$ at $0$, we have 
$\alpha_j\neq\pm 1$ for $j=1,\cdots,n$,
and thus we can set $x_j=(1+\alpha_j)^{\frac{1}{2}}\Re e \tilde{z}_j$ and $y_j=|1-\alpha_j|^{\frac{1}{2}}\Im m \tilde{z}_j$ which gives (\ref{eqmorse}).
\end{proof}

\begin{proof}[Proof of Lemma \ref{lemcritic}]
The content of Lemma \ref{lemcritic} is purely local. From now on we will denote by $U_j$ a connected neighborhood of  
the critical point $p_j$.
As pointed out it is sufficient to compute the number of connected components of the set $\{\rho = -c_j\pm \varepsilon\} \cap  U_j$.
 
Let $p_j$ be a critical point of $\rho$ of Morse index  $2 \leq k_j\leq n$. In a coordinate system centered at $p_j$ given by 
Lemma \ref{lemmorse}, $\rho$ can be written
$$
\rho(z) = -c_j + \sum_{l=1}^{n} x_l^2+\sum_{l=1}^{n} a_l y_l^2 +O(|z|^3)
$$
with $k_j$ negative coefficients among  $a_1\dots,a_n$, say for $I=\{l_1,\cdots,l_{k_j}\}$. For $t_\varepsilon>0$ sufficiently small, the set 
$\{\rho=-c_j+\varepsilon\} \cap \{\sum_{l \in I} y_{l}^2=t_{\varepsilon}\}\cap  U_j$ is a perturbation of a 
$(2n-k_j)$-sphere which varies smoothly with the real parameter $t_\varepsilon$. 
It follows that set $\{\rho=-c_j+\varepsilon\}\cap  U_j$ is connected for small positive $\varepsilon$. 
Moreover, for $t_\varepsilon'>0$  small enough, the set $\{\rho=-c_j+\varepsilon\} \cap \{\sum_{l=1}^{n} x_l^2+\sum_{l \notin I} y_{l}
^2=t_{\varepsilon}'\}\cap  U_j$  is a perturbation of a $k_j$-sphere, and thus the set $\{\rho=-c_j-\varepsilon\}\cap  U_j$ is connected  for small positive 
$\varepsilon$. This proves $(1)$.

Let $p_j$ be a critical point of $\rho$ of Morse index  $1$. In a coordinate system centered at $p_j$ provided by Lemma 
\ref{lemmorse}, $\rho$ has the form
$$
\rho(z) = -c_j + \sum_{l=1}^{n} x_l^2+\sum_{l\neq l_0} y_l^2 -y_{l_0}^2+O(|z|^3)
$$
for some $l_0$. The set $\{\rho=-c_j+\varepsilon\}\cap  U_j$ is connected whereas the set $\{\rho=-c_j-\varepsilon\}\cap  U_j$ is not. 
This  proves $(2)$.

Let $p_j$ be a critical point of $\rho$ of Morse index $0$. In a coordinate system centered at $p_j$ provided by Lemma \ref{lemmorse}, 
$\rho$ has the form
$$
\rho(z) = -c_j + \sum_{l=1}^{n} x_l^2+\sum_{l=1}^n y_l^2 +O(|z|^3),
$$ 
and so the set $\{\rho=-c_j+\varepsilon\} \cap U_j$ is connected and the set $\{\rho=-c_j-\varepsilon\}\cap U_j$ is empty. 
Moreover, since the domain $D$ is relatively compact then $k_r=0$. Finally since $p_r$ is the last  critical point of $\rho$, the level set 
$D\cap  \{\rho=-c_r+\varepsilon\}$ is connected and thus  $C_{r,\varepsilon}=1$.            
\end{proof}

\begin{proof}[Proof of Theorem \ref{theoconnect}] 
Assume by contradiction that the boundary $\partial D=\{\rho=0\}$ of $D$ has $m$ connected components
with $m \geq 2$. For $\varepsilon>0$ small enough, $D\cap \{\rho=-\varepsilon\}$ has $m$ connected components. We claim that  the number of connected components of $D\cap \{\rho= -t\}$ stays larger than or equal
to $m$ as $t\rightarrow c_r$, which contradicts the connectedness of the level sets nearby $\{\rho=-c_r\}$.
According to Lemma \ref{lemcritic}, critical points of $\rho$ of Morse index $k_j$ with $2\leq k_j \leq n$ preserve  the number of connected components of level sets nearby the critical level set $\{\rho=-c_j\}$. Critical points of Morse index $1$ may actually increase the number of connected components of level sets. 
Now assume that $p_j\neq p_r$ is a critical point of Morse index $0$ and denote by $D_j$ the connected 
component of $D\cap\{\rho \geq c_j-2\varepsilon\}$ containing $p_j$. The number of 
connected components of $\{\rho=-c_j-\varepsilon\}\cap D_j$ is positive otherwise  $D_j$ would be 
a connected component of $D$ different from $D$ contradicting the connectedness of  $D$.  
By part (3) of Lemma \ref{lemcritic}, the number of connected components of 
$\{\rho=-c_j+\varepsilon\}\cap D_j$ is larger than $1$ since the index of $p_j$ is $0$. Hence by 
connectedness of $D_j$, there exists a critical point $p_{j'} \in D_j$ with $j'<j$ such that  $C_{j',\varepsilon}<C_{j',-\varepsilon}$. This implies that $C_{j',\varepsilon} \leq C_{j,-\varepsilon}$.   

\end{proof}

\section{Proof of Theorem \ref{theogro1}}

We state a stronger version of Theorem \ref{theogro1}, namely:
\begin{theo}\label{theogro2} 
Let $D$ be a smooth relatively compact domain in an almost complex manifold $(M,J)$. We assume that the boundary $\partial D$ of $D
$ is connected and that $\partial D$ is strongly $J$-pseudoconvex. If $D$ does not contain any Brody $J$-holomorphic curve then 
$(D,d_{(D,J)})$ is Gromov hyperbolic.  
\end{theo}

Notice that a relatively compact domain $D$, defined by a global strictly $J$-plurisubharmonic function,  does not contain any Brody 
curves and thus is Kobayashi hyperbolic. Therefore Theorem \ref{theogro1} follows from Theorem \ref{theoconnect} and Theorem \ref{theogro2}. 
  
\subsection{Proof of Theorem \ref{theogro2}.} 
If $D$ does not contain any Brody $J$-holomorphic curve then we first point out that $(D,J)$ is Kobayashi hyperbolic. Indeed, assuming by contradiction that $(D,J)$ is not Kobayashi hyperbolic we may construct on $D$ a sequence of $J$-holomorphic discs with derivatives exploding at the centers. Since $\partial D$ is strongly $J$-pseudoconvex, it follows from estimates of the Kobayashi infinitesimal pseudometric (see \cite{ga-su}) that such discs do not approach the boundary $\partial D$ of $D$. Hence by a Brody renormalization process we construct a Brody curve contained in $D$, which is a contradiction.

We equip $M$ with an arbitrary smooth Riemannian metric and we denote by ${\rm dist}$ the associated distance.
For  $p \in D$ we define  a boundary projection map $\pi~: D \rightarrow \partial D$ by
${\rm dist}(p,\pi(p))={\rm dist}(p,\partial D)=:\delta(p).$ Notice that the map $\pi$ is uniquely defined near the boundary.  
Set $N_{\varepsilon}(\partial D):=\{q \in D, \delta(q)\leq \varepsilon\}$ where $\varepsilon$ is a sufficiently small positive real number such that $\pi$ is uniquely defined on $N_\varepsilon(\partial D)$ 
(see \cite{sp}, Chapter $9$, for the existence of such a tubular neighborhood). We emphasize that the construction of tubular neighborhoods using the exponential map gives a Riemannian analogue of
some Euclidean lemmas in \cite{ba-bo} (Lemma 2.1 and Lemma 2.2 in particular).
For points $p \in D \backslash N_{\varepsilon}(\partial D)$, $dist(p, \partial D)$ may be reached at different points. We pick up arbitrarily a point $\tilde p \in \partial D$ such that $dist(p,\partial D)=dist(p, \tilde p)$ and we set $\pi(p)=\tilde p$. The projection $\pi$ so defined is not intrinsic but this is not important for our goal.

We define the {\it height} of $p$ by 
$h(p):=\sqrt{\delta(p)}$. 
Then we define a metric  $g~:D\times D \rightarrow [0,+\infty)$ by:
\begin{equation*}
g(p,q):=2\log \left(\frac{d_H(\pi(p),\pi(q))+\max\{h(p),h(q)\}}{\sqrt{h(p)h(q)}}\right),
\end{equation*}
for $p,q \in D$ (see \cite{ba-bo}). It is important to notice that different choices of  a Riemannian metric and of a projection $\pi$ give
a different metric that coincides with $g$ up to an additive bounded term. This is not a matter of fact since we deal with roughly and quasi-isometric spaces.

Z.M.Balogh and M.Bonk proved in \cite{ba-bo} that the metric $g$ satisfies (\ref{3eqgrhy2}) and thus that the metric 
space $(D,g)$ is Gromov hyperbolic. Their proof in the Euclidean space extends directly to the almost complex case. Hence we have the following:
\begin{lem}\label{bb-lemma}
 The metric space $(D,g)$ is Gromov hyperbolic.
\end{lem}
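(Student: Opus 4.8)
The plan is to reduce the almost complex case to the Euclidean computation of Balogh and Bonk by verifying that their geometric estimates are purely local and transfer to $(M,J)$. The statement to prove, Lemma \ref{bb-lemma}, asserts that $(D,g)$ is Gromov hyperbolic, where $g$ is the explicit metric built from the Carnot-Carath\'eodory distance $d_H$ on $\partial D$ and the height function $h(p)=\sqrt{\delta(p)}$. The key observation is that inequality (\ref{3eqgrhy2}) for $g$ is a formal consequence of the defining formula for $g$ together with two structural facts about $d_H$ and $h$: a quasi-metric ``ball-box'' type estimate relating $d_H$ to the ambient Riemannian distance near $\partial D$, and the fact that the height varies in a controlled way along horizontal curves. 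Neither of these facts uses integrability of the complex structure in any essential way, only the strong $J$-pseudoconvexity of $\partial D$, which guarantees (as recalled in Subsection 1.3) that $T^J\partial D=\ker\alpha$ is a genuine contact structure and that $\mathcal L_J\rho$ is positive definite on it.

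First I would fix the defining function $\rho$ of $\partial D$, the taming symplectic form $\omega=dd^c_J\rho$, and the contact form $\alpha=d^c_J\rho|_{T\partial D}$, so that the Levi length entering $d_H$ is well defined and comparable, on the compact boundary $\partial D$, to any fixed Riemannian structure. I would then record the standard Carnot-Carath\'eodory estimates: $d_H$ is locally equivalent to an anisotropic quasi-distance in which horizontal directions are measured linearly and the ``missing'' Reeb direction quadratically, and globally $d_H$ dominates ${\rm dist}|_{\partial D}$ while being dominated by its square root up to a multiplicative constant. These are exactly the inputs Balogh and Bonk use, and they follow here from the compactness of $\partial D$ together with the contact and Levi-positivity properties already established in the Preliminaries. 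The point is that, once these comparability statements are in hand, every subsequent manipulation in \cite{ba-bo} is an inequality between real numbers built from $d_H$, $h(p)$, $h(q)$, and their maxima and products, with no further reference to the ambient geometry.

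The main step is then to run the Balogh-Bonk verification of (\ref{3eqgrhy2}) for $g$ verbatim. Writing out $g(p,q)$ and using $\log$-arithmetic, the four-point inequality
\begin{equation*}
g(x,y)+g(z,\omega)\leq \max\bigl(g(x,z)+g(y,\omega),\,g(x,\omega)+g(y,z)\bigr)+2\delta
\end{equation*}
reduces, after exponentiating and clearing the common $\sqrt{h(\cdot)h(\cdot)}$ factors, to a comparison among the quantities $d_H(\pi(\cdot),\pi(\cdot))+\max\{h(\cdot),h(\cdot)\}$. This comparison is controlled by the quasi-metric (approximate triangle) inequality for $d_H$ and by the elementary inequality $\max\{a,c\}\cdot\max\{b,d\}\leq \max\{\max\{a,b\},\max\{c,d\}\}\cdot(\cdots)$ organizing the height terms; one checks it holds with a uniform $\delta$ depending only on the quasi-metric constant of $d_H$. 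I would cite Theorem $1.1$ of \cite{ba-bo} for this computation, emphasizing that its proof is insensitive to the replacement of $(\mathbb R^{2n},J_{st})$ by $(M,J)$ because it only uses the abstract properties of $d_H$ and $h$ just isolated.

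The hard part, and the place where I would spend the most care, is justifying that the Carnot-Carath\'eodory comparison estimates genuinely hold uniformly near $\partial D$ in the almost complex setting. In the integrable case these estimates rest on explicit local model computations in $\mathbb C^n$; here one must instead use the Diederich-Sukhov normalization (the same normalization invoked in Lemma \ref{lemmorse}), in which the $J$- and $J_{st}$-Levi forms agree to leading order at each boundary point, to obtain the ball-box estimate with constants that are uniform over the compact boundary. Once this uniform equivalence between $d_H$ and the Euclidean-model Carnot-Carath\'eodory distance is established, the rest of the argument is the purely formal real-variable verification of (\ref{3eqgrhy2}) from \cite{ba-bo}, and Lemma \ref{bb-lemma} follows.
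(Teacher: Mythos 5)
Your core strategy coincides with the paper's: the paper proves Lemma \ref{bb-lemma} precisely by remarking that the Balogh--Bonk verification of (\ref{3eqgrhy2}) for $g$ is a formal, purely metric computation, so it extends verbatim to the almost complex setting once the ingredients of the formula for $g$ make sense. On that central point you and the paper agree, and your reduction of the four-point inequality (exponentiate, cancel the common factor $\sqrt{h(\cdot)h(\cdot)h(\cdot)h(\cdot)}$, compare the quantities $d_H(\pi(\cdot),\pi(\cdot))+\max\{h(\cdot),h(\cdot)\}$) is the right picture of that computation.

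Where you go astray is in identifying what the transfer actually requires. The four-point inequality for $g$ uses only that $d_H$ is a \emph{bounded metric} on the compact connected boundary: the triangle inequality is automatic from the definition of $d_H$ as an infimum of Levi lengths (so $d_H$ is a genuine metric, not merely a quasi-metric with some constant, as your wording suggests), finiteness comes from Chow's theorem \cite{ch} applied to the contact structure $T^J\partial D$ (Subsection 1.3), and boundedness then follows from compactness of $\partial D$ by standard sub-Riemannian arguments \cite{bel,gr3}. No ball-box estimate, no comparison of $d_H$ with the ambient Riemannian distance, and no uniform equivalence with a Euclidean local model enter this step; the comparison among the quantities $d_H+\max\{h,h\}$ is driven by the triangle inequality for $d_H$ alone. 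The ball-box estimates of \cite{na-st-wa} that you single out as ``the hard part'' are indeed used in the paper, but for a different statement: Proposition \ref{prop-quasi}, the quasi-isometry between $(D,d_{(D,J)})$ and $(D,g)$, where one must relate Kobayashi lengths of curves to Carnot-Carath\'eodory lengths of their projections (Case 4 of that proof). Moreover, even there, these are facts of sub-Riemannian geometry valid for any contact distribution with a positive definite form on it; they do not require the Diederich--Sukhov normalization, which the paper invokes for an unrelated purpose (the Morse Lemma, Lemma \ref{lemmorse}). So your proposed proof, if carried out, would be valid---the extra estimates you develop are true and do imply boundedness of $d_H$---but presenting them as the essential content of Lemma \ref{bb-lemma} misreads the division of labor in the argument: this lemma is the soft step, and the genuinely quantitative work is deferred to Proposition \ref{prop-quasi}.
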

However the space $(D,g)$ is not geodesic. In order to construct a geodesic Gromov hyperbolic metric space, 
we need to perturb the metric $g$. The following construction is due to L.Blanc-Centi \cite{bl} in $\mathbb R^{2n}$. For $p$ 
(resp. $q$) in $D$  
denoted by  $p_{\varepsilon}$ (resp. $q_{\varepsilon}$) the point on the fiber $\pi^{-1}(\pi(p))$ (resp. $\pi^{-1}(\pi(q))$) with 
height  $\sqrt{\varepsilon}$ 
and let
$
l_g(\gamma):=\sup_{0=t_0< t_1 < \cdots < t_n=1} \sum_{i=1}^{n}g(\gamma(t_{i-1}), ~\gamma(t_i)).
$
Then we define a new metric $d$ as follows:

\begin{equation*}d(p,q):=
\left\{
\begin{array}{lll} 
\inf \{l_g(\gamma), \gamma: [0,1]\rightarrow N_{\varepsilon}(\partial D) \mbox{smooth curve joining } p \mbox{ and } q\}~ \mbox{for}~ p, q \in N_{\varepsilon}(\partial D),\\
\\

d(p,q_{\varepsilon})+{\rm dist}(q, q_{\varepsilon})~\mbox{for} ~p \in N_{\varepsilon}(\partial D), q \in  D\setminus N_{\varepsilon}(\partial D), \\
\\
{\rm dist}(p,q)~\mbox{for} ~p, q \in D\setminus N_{\varepsilon}(\partial D)~  \mbox{such that}~ \pi(p)=\pi(q),\\
\\
{\rm
dist}(p,p_{\varepsilon})+d(p_{\varepsilon},q_{\varepsilon})+{\rm dist}(q,
q_{\varepsilon})
~\mbox{for} ~p, q \in D\setminus N_{\varepsilon}(\partial D) ~\mbox{such that} ~\pi(p)\neq \pi(q).\\
\end{array}
\right.
\end{equation*}

Then we have the following:
 \begin{prop}\label{prop}
 $(i)$ The metric space $(D,d)$ is Gromov hyperbolic.

$(ii)$ $(D,d)$ is geodesic.
 \end{prop}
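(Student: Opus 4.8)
The plan is to prove Proposition \ref{prop} by showing that the new metric $d$ is a genuine geodesic distance on $D$ that stays a bounded distance from the original metric $g$, so that Lemma \ref{bb-lemma} together with the stability of Gromov hyperbolicity under rough isometry (and under quasi-isometry of geodesic spaces, via Theorem [Gh-dlH]) yields both conclusions. I would treat the two parts in the order $(ii)$ then $(i)$, because establishing that $(D,d)$ is geodesic is what makes the comparison arguments for $(i)$ clean.

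For part $(ii)$, I would first observe that $d$ is defined as an infimum of $g$-lengths $l_g(\gamma)$ inside the collar $N_\varepsilon(\partial D)$, patched to the ambient Riemannian distance ${\rm dist}$ on the fibers $\pi^{-1}(\pi(p))$ outside the collar. The key is to verify that the four cases in the definition of $d$ are mutually consistent and satisfy the triangle inequality, i.e.\ that $d$ really is a distance and not merely a symmetric nonnegative function; here I would check that the prescribed decomposition of any path through the boundary points $p_\varepsilon,q_\varepsilon$ of height $\sqrt{\varepsilon}$ minimizes, using that the infimum defining $d$ inside $N_\varepsilon(\partial D)$ is realized along curves and that ${\rm dist}$ is itself a geodesic distance for the chosen smooth Riemannian metric. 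To produce actual geodesic segments, I would argue that in the collar the length structure $l_g$ comes from the continuous symmetric function $g$, so that a Hopf--Rinow type argument (local compactness of $D$, completeness of the length metric toward the interior) gives existence of length-minimizing curves; outside the collar the radial pieces along $\pi^{-1}(\pi(p))$ are ${\rm dist}$-geodesics by construction. Concatenating these minimizing pieces according to the case distinction produces an isometric image of $[0,d(p,q)]$, which is exactly what geodesicity requires. This is essentially the content of the construction of L.Blanc-Centi \cite{bl}, adapted verbatim to the almost complex collar.

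For part $(i)$, I would show that the identity map $(D,g)\to(D,d)$ is a quasi-isometry, in fact that $d$ and $g$ differ by bounded multiplicative and additive constants. The comparison $g\lesssim d$ is immediate since $d$ is built from $g$-lengths, while the reverse bound $d\lesssim g$ requires estimating the $g$-length of a well-chosen connecting curve (a horizontal segment in $\partial D$ lifted into the collar, together with two vertical fiber segments) against $g(p,q)$ itself, using the explicit logarithmic form of $g$ in terms of $d_H$ and the heights $h(p),h(q)$. Once the identity is a quasi-isometry between the geodesic space $(D,d)$ and $(D,g)$, I would invoke Theorem [Gh-dlH] — but since $(D,g)$ need not be geodesic, I would instead transfer hyperbolicity directly: $(D,g)$ is Gromov hyperbolic by Lemma \ref{bb-lemma}, inequality $(\ref{3eqgrhy2})$ is preserved up to enlarging $\delta$ under a rough isometry, and I would verify that the bounded comparison $|d-g|\le C$ is precisely a rough isometry, so $(D,d)$ inherits $(\ref{3eqgrhy2})$ with a new constant. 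Thus $(D,d)$ is Gromov hyperbolic and geodesic.

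The main obstacle I anticipate is the matching of the collar and exterior definitions in part $(ii)$: verifying that the piecewise definition of $d$ genuinely satisfies the triangle inequality and that the minimizing curves do not prefer to re-enter and exit the collar in a way that breaks the case distinction. In particular one must confirm that the height-$\sqrt{\varepsilon}$ transition points $p_\varepsilon$ are the correct gluing locus and that no shortcut through $D\setminus N_\varepsilon(\partial D)$ beats the prescribed value; controlling this requires knowing that $g$ blows up as $h\to 0$ so that geodesics are repelled from the very boundary, together with the fact that outside the collar the relevant cost is the finite Riemannian distance. Establishing the bounded comparison $|d-g|\le C$ uniformly up to the boundary is the other delicate point, since both quantities degenerate near $\partial D$ and one must show the degeneracies cancel.
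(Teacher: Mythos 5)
Your proposal is correct and takes essentially the same route as the paper: for $(i)$ you ultimately rely (as the paper does) on the rough isometry $|d-g|\le C$, established via normal and fiber/horizontal curves in the collar, together with the stability of the four-point condition under rough isometries (rather than Theorem [Gh-dlH], which indeed cannot apply since $(D,g)$ is not geodesic); for $(ii)$ you use Hopf--Rinow outside the collar and Blanc-Centi's construction of geodesics inside it, exactly as in the paper's two cases. The differences (proving $(ii)$ before $(i)$, and your extra verification that $d$ satisfies the triangle inequality) are cosmetic.
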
 
To prove Proposition \ref{prop} $(i)$ it is sufficient to prove that $(D,d)$ and $(D,g)$ are roughly isometric since $(D,g)$ is Gromov hyperbolic. The proof of $(ii)$ consists essentially in constructing geodesic curves joining any two points. The proof of Proposition \ref{prop} follows \cite{bl}. It was obtained there in case $M$ is the Euclidean space $\R^{2n}$ endowed with some almost complex structure. However the arguments remain valid for domains in an almost complex manifold. For convenience we include the key points of the proof.

\begin{proof}[Proof of Proposition \ref{prop}]
$(i)$ Since $(D,g)$ is Gromov hyperbolic according to Lemma \ref{bb-lemma}, we just need to prove that $(D,g)$ and  $(D,d)$ are roughly isometric. It is sufficient to prove that these spaces are roughly isometric near the boundary, namely
 that there is a positive constant $c$ such that for all  $ p,q \in N_{\varepsilon}(\partial D)$:
$$
g(p,q)-c \leq d(p,q) \leq g(p,q)+c.
$$
Restricting to $N_{\varepsilon}(\partial D)$ we define two types of curves in $(D,J)$:

- Normal curves are curves defined on the fibers of the projection. They are purely local objects 
since points considered belong to $N_{\varepsilon}(\partial D)$ and have the same boundary projection (see Lemma 1 in \cite{bl}). 

- Horizontal curves are curves  joining two points $p,q \in N_{\varepsilon}(\partial D)$ with same height $h(p)=h(q)$. They are defined as follows: 
since $(\partial D, d_H)$ is geodesic, we consider a geodesic curve $\alpha$ in $\partial D$ joining $\pi(p)$ and $\pi(q)$. 
For each $t$, we consider the point $\gamma(t) \in N_{\varepsilon}(\partial D)$ in the fiber $\pi^{-1}(\alpha(t))$ with height $h(p)=h(q)$. 
Then $\gamma$  defines a smooth horizontal curve in $N_{\varepsilon}(\partial D)$.

These two kinds of curves being defined for manifolds the proof that 
 $(D,g)$ and  $(D,d)$ are roughly isometric is then straightforward, see \cite{bl}.
Hence the metric space $(D,d)$ is Gromov hyperbolic.

\vskip 0,2cm
$(ii)$ The proof that $(D,d)$ is geodesic is achieved by studying the relative positions of any two points $p,q \in D$. That was first achieved in \cite{ba-bo}. By the definition of the metric 
$d$, it reduces to the two following cases.

\vskip 0,1cm
\noindent{\underline {Case 1}.} $p,q \in D\setminus N_{\varepsilon}(\partial D)$ satisfy $\pi(p)=\pi(q)$.  
Then $d$ coincides with the metric ${\rm dist}$ induced by a Riemannian metric. 
Since $(\overline{D\setminus N_{\varepsilon}(\partial D)}, {\rm dist})$ is compact, it is complete and thus according to the Hopf-Rinow Theorem, 
it is geodesic.

\vskip 0,1cm
\noindent{\underline{Case 2}.} $p,q \in N_{\varepsilon}(\partial D)$. One can prove that there exists a geodesic curve contained in $N_{\varepsilon}(\partial D)$, namely a curve $\gamma:[0,1] \rightarrow N_{\varepsilon}(\partial D)$ such that $l_g(\gamma)=d(x,y)$. This is the content of Lemma 4 in \cite{bl}.
\end{proof}

\begin{rem} It follows from the construction of the metric $g$ that the boundary $\partial_G D$ of $(D,g)$, as a Gromov hyperbolic space, may be identified with the boundary $\partial D$ of the domain $D$. The same holds for the boundary of $(D,d)$ as a Gromov hyperbolic space, since the spaces $(D,g)$ and $(D,d)$ are roughly isometric.
\end{rem}

The space $(D,d)$ being geodesic and Gromov hyperbolic by Proposition \ref{prop}, it remains to show that the metric space  $(D,d_{(D,J)})$ is geodesic and 
quasi-isometric to $(D,d)$, in order to complete the proof of Theorem~\ref{theogro2}. 
We will use the following precise estimates of the Kobayashi metric obtained in \cite{ga-su}: 

\vskip 0,2cm
\noindent{\bf Theorem [G-S].}
{\sl
Let $D$ be a relatively compact domain  in an almost complex manifold $(M,J)$. Assume that 
$\partial D=\{\rho=0\}$ where  
$\rho$ is  strictly $J$-plurisubharmonic in a neighborhood $U$ of $\partial D$. 
Then there exists a positive constant $C$  such that~:
\begin{equation}\label{eqest}
 \frac{1}{C}\left[\frac{|\partial_J\rho(p)(v - iJ(p)v)|^2}
{|\rho(p)|^2} + \frac{|v|^2}{|\rho(p)|}\right]^{1/2} \leq K_{(D,J)}(p,v) \leq  C\left[\frac{|\partial_J\rho(p)(v - iJ(p)v)|^2}
{|\rho(p)|^2} + \frac{|v|^2}{|\rho(p)|}\right]^{1/2}
\end{equation}
for every $p \in D\cap U$ and every $v \in T_pM$.
}

\vskip 0,2cm
We emphasize that as pointed out in Subsection $1.3$, the existence of such a function $\rho$ is ensured as soon as $D$ is strongly $J$-pseudoconvex. 
As a classical consequence of the lower estimate of the Kobayashi metric (\ref{eqest}), we obtain the completeness of  
the metric space $(D,d_{(D,J)})$. According to the Hopf-Rinow Theorem (see \cite{gr4} p. 9) the metric space $(D,d_{(D,J)})$ is geodesic.
Moreover we have:
\begin{prop}\label{prop-quasi}
 The metric spaces $(D,d_{(D,J)})$ and $(D,d)$ are quasi-isometric.
\end{prop}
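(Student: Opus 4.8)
The plan is to compare the Kobayashi distance directly with the explicit metric $g$, and then to transport the result to $d$ through the rough isometry already established. Since $(D,d)$ and $(D,g)$ are roughly isometric by Proposition \ref{prop}$(i)$, and a rough isometry is in particular a quasi-isometry, it suffices to show that the identity map $\mathrm{id}\colon (D,d_{(D,J)}) \to (D,g)$ is a quasi-isometry, i.e. that there are constants $\lambda \ge 1$ and $c>0$ with
\[
\frac{1}{\lambda}\, g(p,q) - c \ \le\ d_{(D,J)}(p,q)\ \le\ \lambda\, g(p,q) + c
\]
for all $p,q \in D$. On the compact set $D\setminus N_\varepsilon(\partial D)$ both metrics are bounded and comparable to the background Riemannian distance, so the content lies near $\partial D$, where I would invoke Theorem [G-S]. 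Throughout I would use that $h(p)^2=\delta(p)$ is comparable to $|\rho(p)|$, and read (\ref{eqest}) as saying that $K_{(D,J)}(p,v)$ is comparable to $\dfrac{|\partial_J\rho(p)(v-iJ(p)v)|}{|\rho(p)|}+\dfrac{\|v\|}{\sqrt{|\rho(p)|}}$, the first summand measuring the complex-transverse part of $v$ and the second the full length, weighted by the two anisotropic scales $|\rho|$ and $\sqrt{|\rho|}$.

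For the upper bound I would exhibit an explicit competitor, reusing the two families of curves introduced in the proof of Proposition \ref{prop}: a \emph{normal} segment raising $p$ to the point of its fiber $\pi^{-1}(\pi(p))$ at height $h^{\ast}:=d_H(\pi(p),\pi(q))+\max\{h(p),h(q)\}$, a \emph{horizontal} curve at constant height $h^{\ast}$ following a Carnot--Carath\'eodory geodesic of $\partial D$ from $\pi(p)$ to $\pi(q)$, and a normal segment lowering back to $q$. Integrating the upper estimate in (\ref{eqest}), the normal segments contribute $\int d\delta/\delta \approx 2\log(h^{\ast}/h(p))$ and $2\log(h^{\ast}/h(q))$, while on the horizontal part the first summand is of lower order and, using that the Levi form is comparable to $\|\cdot\|^2$ on $T^J\partial D$, the second contributes $\approx d_H(\pi(p),\pi(q))/h^{\ast}$, which is bounded. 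Summing yields $d_{(D,J)}(p,q)\le \lambda\,g(p,q)+c$.

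The lower bound is the main obstacle. For an arbitrary $\mathcal{C}^1$ curve $\gamma$ joining $p$ and $q$ I would bound $l_K(\gamma)$ from below via the left inequality in (\ref{eqest}), after splitting $\gamma'$ into its complex-transverse and complex-tangential parts. The transverse part controls the vertical motion: since $|\partial_J\rho(\gamma'-iJ\gamma')|\gtrsim \big|\tfrac{d}{dt}\rho(\gamma)\big|$, integrating $\tfrac{|\partial_J\rho(\gamma'-iJ\gamma')|}{|\rho(\gamma)|}$ bounds below $\int\big|\tfrac{d}{dt}\log|\rho(\gamma)|\big|\,dt$, hence gives a bound $\gtrsim \log(h_{\max}/h(p))+\log(h_{\max}/h(q))$, where $h_{\max}$ is the maximal height reached by $\gamma$. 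The tangential part controls the horizontal motion: $\int \frac{\|\gamma'\|}{\sqrt{|\rho(\gamma)|}}\,dt \ge \frac{1}{h_{\max}}\int \|\gamma'_H\|\,dt \gtrsim \frac{1}{h_{\max}}\,d_H(\pi(p),\pi(q))$, the last step being the delicate comparison between the interior tangential length of $\gamma$ and the Carnot--Carath\'eodory distance of the projected endpoints. Optimizing the resulting lower bound $\frac{d_H}{h_{\max}}+2\log h_{\max}-\log(h(p)h(q))$ over $h_{\max}\ge \max\{h(p),h(q)\}$ reproduces $2\log\frac{d_H+\max\{h(p),h(q)\}}{\sqrt{h(p)h(q)}}$ up to an additive constant, i.e. $d_{(D,J)}(p,q)\ge \frac{1}{\lambda}g(p,q)-c$. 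The genuinely technical points, where I expect the real work, are justifying the transverse/tangential splitting uniformly near $\partial D$ and estimating $\int\|\gamma'_H\|\,dt$ below by $d_H$ of the projections while allowing $\gamma$ to make arbitrary vertical excursions.

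Combining the two bounds shows that $(D,d_{(D,J)})$ and $(D,g)$ are quasi-isometric, and therefore, through the rough isometry of Proposition \ref{prop}$(i)$, that $(D,d_{(D,J)})$ and $(D,d)$ are quasi-isometric, as claimed. I note that only a quasi-isometry, and not a rough isometry, can be expected here: the multiplicative constant $\lambda>1$ enters precisely because the two-sided estimate (\ref{eqest}) involves a non-sharp constant $C$, in contrast with the sharp estimates of Ma used by Balogh--Bonk in the Euclidean case.
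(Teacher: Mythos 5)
Your overall scheme coincides with the paper's: reduce via the rough isometry of Proposition \ref{prop} to comparing $d_{(D,J)}$ with $g$, obtain the upper bound by integrating the right-hand estimate of Theorem [G-S] along a competitor built from two normal segments and one horizontal curve, and obtain the lower bound by estimating the Kobayashi length of an arbitrary curve from below. The upper bound is essentially the paper's argument, up to one fixable point: your height $h^{\ast}=d_H(\pi(p),\pi(q))+\max\{h(p),h(q)\}$ may exceed $\sqrt{\varepsilon}$, whereas normal and horizontal curves (and the boundary estimates) only live in $N_{\varepsilon}(\partial D)$; the paper caps the height at $h_0=\min\{\sqrt{\varepsilon},d_H(\pi(p),\pi(q))\}$ and absorbs the rest into the boundedness of $d_{(D,J)}$ on $D\setminus N_{\varepsilon}(\partial D)$.

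The genuine gap is in the lower bound, precisely at the step you flag as ``the delicate comparison,'' and the mechanism you propose for it fails. First, bounding the transverse summand below by $\bigl|\tfrac{d}{dt}\rho(\gamma)\bigr|/|\rho(\gamma)|$ discards all motion in the Reeb direction: a vector can have a large component $\partial_J\rho(v-iJv)$ while being tangent to the level sets of $\rho$ (for the Reeb field $R$ of $\alpha=d^c_J\rho_{|T\partial D}$ one has $d\rho(R)=0$ but $d^c_J\rho(R)=1$). Second, and fatally, the claim $\int\|\gamma'_H\|\,dt\gtrsim d_H(\pi(p),\pi(q))$ is false: in the model of the Siegel domain $\{\Im m\, w>|z|^2\}$, take boundary points differing only in $\Re e\, w$ (the Reeb coordinate) and join the corresponding interior points by a curve lying entirely in the $w$-plane; then $\gamma'_H\equiv 0$ while $d_H(\pi(p),\pi(q))>0$ — indeed the Carnot--Carath\'eodory distance is comparable to the \emph{square root} of the Reeb displacement. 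So both halves of your splitting lose exactly the motion that makes $d_H$ large, and the quantity $\frac{d_H}{h_{\max}}+2\log h_{\max}-\log(h(p)h(q))$ you then optimize is simply not a proved lower bound for $l_K(\gamma)$. This anisotropy (horizontal displacement costing length, Reeb displacement costing its square root) is the entire difficulty in the paper's Case 4 when the curve stays at low height, and it is resolved there not by a pointwise splitting of $\gamma'$ but by a dyadic decomposition of $\gamma$ by height levels combined with the box-ball estimates of Nagel--Stein--Wainger \cite{na-st-wa} comparing Carnot--Carath\'eodory balls with polydiscs, imported from \cite{ba-bo}, pp.~521--523. Without that ingredient (or a substitute for it), your proof is incomplete at its central point.
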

The proof of Proposition \ref{prop-quasi} is essentially given by \cite{ba-bo} and remains valid in our setting. 
For seek of completeness  we include the key steps of the proof.
\begin{proof}[Proof of Proposition \ref{prop-quasi}]
Since $(D,d)$ and  $(D,g)$ are roughly isometric, it is sufficient to prove that $(D,d_{(D,J)})$ and $(D,g)$ are quasi-isometric, namely that there exists two positive constants $\lambda$ and $c$ such that for all  $p,q \in D$
\begin{equation}\label{eqqi}
\frac{1}{\lambda}g(p,q)-c \leq d_{(D,J)}(p,q) \leq \lambda g(p,q)+c.
\end{equation}
This is obtained by considering various cases depending on the relative positions of $p$ and $q$.

First notice that by integration, we obtain from (\ref{eqest}) some estimates of the Kobayashi length of
curves near the boundary. There are positive constants  $C_1$ and $C_2$ such that for any smooth curve
$\gamma: [0,1]\rightarrow  N_{\varepsilon}(\partial D)$ we have:
\begin{equation}\label{eqest1}
\frac{1}{C_1}\left |\log \frac{h(\gamma(1))}{h(\gamma(0))}\right |-C_2 \leq l_{K}(\gamma).
\end{equation}
Moreover in case $\gamma$ is a normal curve, the estimates are local since points on $\gamma$ have the same boundary projection and  
$\gamma([0,1])\subseteq N_{\varepsilon}(\partial D)$. Thus the situation is similar to the Euclidean case considered by \cite{ba-bo} and we have  
\begin{equation}\label{eqest2}
l_{K}(\gamma) \leq C_1\left |\log\frac{h(\gamma(1))}{h(\gamma(0))}\right |+C_2,
\end{equation}
increasing $C_1$ and $C_2$ if necessary.

\vskip 0,1cm
\noindent{\underline {Case 1}.} $p,q \in D\setminus N_{\varepsilon}(\partial D)$. Since $d_{(D,J)}$ and $g$ are non negative and  
uniformly bounded on 
$D\setminus N_{\varepsilon}(\partial D)$ we obtain (\ref{eqqi}). 

\vskip 0,1cm
\noindent{\underline {Case 2}.} $p \in N_{\varepsilon}(\partial D)$ and $q \in D\setminus N_{\varepsilon}(\partial D)$. By definition we have 
$$g(p,q):=2\log \left(\frac{d_H(\pi(p),\pi(q))+h(q)}{\sqrt{h(p)h(q)}}\right)$$
and thus
$$\log\frac{1}{h(p)}-c \leq g(p,q)\leq \log\frac{1}{h(p)}+c.$$ 

Since $d_{(D,J)}$ is uniformly bounded on $D\setminus N_{\varepsilon}(\partial D)$, we have
$$d_{(D,J)}(p,q)\leq d_{(D,J)}(p,p_\varepsilon)+d_{(D,J)}(p_\varepsilon,q)\leq d_{(D,J)}(p,p_\varepsilon)+c,$$
for some uniform positive constant still denoted by $c$. Here $p_\varepsilon$ is the unique point on $\pi^{-1}(\pi(p))$ with $h(p_\varepsilon)=\sqrt{\varepsilon}$ (or equivalently $\delta(p_\varepsilon)=\varepsilon$). Since $\pi(p)=\pi(p_\varepsilon)$,   we obtain with (\ref{eqest2})
$$d_{(D,J)}(p,p_\varepsilon)\leq C_1\log \frac{1}{h(p)}+C_2.$$
Therefore we obtain the right inequality of (\ref{eqqi}). 

Now consider a smooth curve $\gamma$ joining $p$ and $q$ and let $q'$ be the first point on the curve with $h(q')=\sqrt{\varepsilon}$. 
Let $\beta$ be the sub-curve of $\gamma$ joining $p$ and $q'$. According to (\ref{eqest1}) we get 
$$\frac{1}{C_1}\log\frac{1}{h(p)}-C_2 \leq l_K(\beta) \leq l_K(\gamma)$$
which implies, after taking the infimum over all possible curves $\gamma$ joining $p$ and $q$, that $\frac{1}{C_1}\log \frac{1}{h(p)}-C_2\leq d_{(D,J)}(p,q)$. Hence we obtain the left inequality of  (\ref{eqqi}).

\vskip 0,1cm
\noindent{\underline {Case 3}.} $p,q \in N_{\varepsilon}(\partial D)$ and ${\rm max}\{h(p),h(q)\}\geq d_H(\pi(p),\pi(q))$. 
Assume for instance that $h(p)\leq h(q)$. We have 
$$\log\frac{h(q)}{h(p)}-c \leq g(p,q)\leq \log\frac{h(q)}{h(p)}+c.$$

Denote by $p'$ the point in the fiber $\pi^{-1}(\pi(p))$ such that $h(p')=h(q)$. Then (\ref{eqest2}) provides
$$d_{(D,J)}(p,p')\leq C_1\log \frac{h(q)}{h(p)}+C_2.$$
Consider a horizontal curve $\gamma$ joining $p'$ and $q$.  Since $d_H(\pi(p'),\pi(q))=d_H(\pi(p),\pi(q))\leq {\rm max}\{h(p),h(q)\} \leq \varepsilon$, the situation 
is local and thus can be reduced to the Euclidean case. Hence following \cite{ba-bo}, we obtain
$$K_{(D,J)}(\gamma(t),\gamma'(t))^2\leq C \frac{\mathcal{L}_J\rho((\pi\circ \gamma)(t),(\pi\circ \gamma)'(t))}{\delta(q)}$$
and by integration
$$d_{(D,J)}(p',q)\leq C\frac{d_H(\pi(p),\pi(q))}{h(q)}.$$
Since $h(q)\geq d_H(\pi(p),\pi(q))$ we obtain $d_{(D,J)}(p',q)\leq C$. This gives the right inequality of (\ref{eqqi}).

The lower bound, and therefore the left inequality of (\ref{eqqi}), is easily obtained by applying (\ref{eqest1}).

\vskip 0,1cm
\noindent{\underline {Case 4}.} $p,q \in N_{\varepsilon}(\partial D)$ and ${\rm max}\{h(p),h(q)\} < d_H(\pi(p),\pi(q))$.
Assume for instance that $h(p)\leq h(q)$. We have 
$$2\log\frac{d_H(\pi(p),\pi(q))}{\sqrt{h(p)h(q)}}-c \leq g(p,q)\leq 2\log\frac{d_H(\pi(p),\pi(q))}{\sqrt{h(p)h(q)}}+c.$$

Set $h_0:={\rm min} \{\sqrt{\varepsilon},d_H(\pi(p),\pi(q))\}$ and denote by $p'$ (resp. $q'$) the point on the fiber $\pi^{-1}(\pi(p))$ 
(resp. $\pi^{-1}(\pi(q))$) with height $h_0$. As previously, one can show that
$$d_{(D,J)}(p,q)\leq C_1\log \frac{h_0}{\sqrt{h(p)h(q)}}+C_2.$$
by considering a normal curve joining $p$ and $p'$ (resp. $q$ and $q'$) and a horizontal curve joining $p'$ and $q'$. This gives the right inequality of  (\ref{eqqi}). 

Let $\gamma$ be a curve in $N_{\varepsilon}(\partial D)$ joining $p$ and $q$. Set $H:={\rm max}_{t \in [0,1]}h(\gamma(t))=h(\gamma(t_0))$ and denote by $\gamma_1$ (resp. $\gamma_2$) the sub-curve of $\gamma$ joining $p$ and $\gamma(t_0)$ (resp. $\gamma(t_0)$ and $q$). 

If $H\geq h_0$, the left inequality of (\ref{eqqi}) follows immediately by applying (\ref{eqest1}) to the sub-curves $\gamma_1$ and 
$\gamma_2$. In case $H< h_0$ we follow precisely \cite{ba-bo}, p. 521-523. Lower estimates of $d_{(D,J)}(p,q)$ are obtained by considering a dyadic decomposition of the curve $\gamma$ by height levels and some estimates relating 
the Kobayashi lengths of those pieces and the Carnot-Carath\'eodory lengths of the projected curve 
$\pi \circ \gamma$, known as the box ball estimates (see \cite{na-st-wa}). The crucial fact used 
in \cite{ba-bo} and that can be applied directly here is that the size of balls for the Carnot-Carath\'eodory distance can be approximated by 
Euclidean polydiscs. This ends the proof of Proposition~\ref{prop-quasi}. 

\end{proof}

Finally, Theorem [Gh-dlH] implies that the space $(D,d_{(D,J)})$ is Gromov hyperbolic which proves Theorem \ref{theogro2}. \qed

\subsection{A corollary of Theorem \ref{theogro2}} 

\begin{cor}\label{cor1}
Let $(V,\omega)$ be a symplectic manifold and let $D$ be a smooth relatively compact domain contained in $V$. Assume that $\partial D$ is connected and of contact type. Then there exists an almost complex structure $J$ on $V$, tamed by $\omega$, for which the following equivalence is satisfied:

$(D,d_{(D,J)})$ is Gromov hyperbolic if and only if $D$ does not contain Brody $J$-holomorphic curves.
\end{cor}
\begin{proof}

Let $\zeta=Ker (\alpha)$ be the contact structure on $\partial D$ (see Subsection 1.3). Since $\omega_{|\zeta}$ is nondegenerate, there exists an almost complex structure $J$ defined on a neighborhood of $\partial D$ and tamed by $\omega$, such that $\zeta$ is $J$-invariant and $d\alpha (v,Jv) > 0$ for all non-zero $v \in \zeta$. According to \cite{mcd} let $\rho: V \rightarrow \mathbb R$  be a smooth function such that $\partial D=\rho^{-1}(0)$ and $d\rho(X) > 0$ where $X$ is given by Subsection 1.3. Then for $v \in \zeta$ we have $Jv \in \zeta$ and $Ker (d^c_J\rho_{|\partial D})=Ker (\alpha_{|\partial D})$. This implies that there is a positive function $\mu$ defined in a neighborhood of $\partial D$ such that $dd^c_J\rho = \mu\, d\alpha$ on $\zeta$ (see Lemma 2.4 in \cite{mcd}). In particular $\partial D$ is strongly $J$-pseudoconvex. Since $\partial D$ is connected, we may apply Theorem \ref{theogro2} to conclude the proof of Corollary \ref{cor1}.
\end{proof}

\vskip 0,5cm
{\small
\noindent Florian Bertrand\\
Department of Mathematics, University of Vienna\\
Nordbergstrasse 15, Vienna, 1090, Austria\\
{\sl E-mail address}: florian.bertrand@univie.ac.at\\
\\
Herv\'e Gaussier\\
(1) UJF-Grenoble 1, Institut Fourier, Grenoble, F-38402, France\\
(2) CNRS UMR5582, Institut Fourier, Grenoble, F-38041, France\\
{\sl E-mail address}: herve.gaussier@ujf-grenoble.fr\\
} 
 
\end{document}